\newcommand\cyr{%
\renewcommand\rmdefault{wncyr}%
\renewcommand\sfdefault{wncyss}%
\renewcommand\encodingdefault{OT2}%
\normalfont
\selectfont}
\DeclareTextFontCommand{\textcyr}{\cyr}
\def\cprime{\char126}
\def\antiddot{\mathinner{\mkern1mu\raise1pt\vbox{\kern7pt\hbox{.}}\mkern2mu
        \raise4pt\hbox{.}\mkern2mu\raise7pt\hbox{.}\mkern1mu}}
\newcommand{\PP}{{\mathbb P}}
\newcommand{\ZZ}{{\mathbb Z}}
\newcommand{\s}{\mathcal}
\newcommand{\sB}{{\s B}}
\newcommand{\sF}{{\s F}}
\newcommand{\sO}{{\s O}}
\newcommand{\tensor}{\otimes}
\newcommand{\punkt}{\hspace{-.3ex}\raise.15ex\hbox to1ex{\Huge.}}
\DeclareMathOperator{\Hom}{Hom}
\DeclareMathOperator{\rank}{rank}
\newtheorem{theorem}{Theorem}[section]
\newtheorem{lemma}[theorem]{Lemma}
\newtheorem{proposition}[theorem]{Proposition}
\theoremstyle{definition}
\newtheorem{example}[theorem]{Example}
\def\bT{{\bf T}}
\def\bU{{\bf U}}
\def\b1{{\bf 1}}
\def\b1{{1^t}}
\def\o{{\emptyset}}
\def\PP{{\mathbb P}}
\def\cornerT#1{{T_{\Rsh \kern -1pt #1}}}
\def\Tate#1#2#3#4{{T_#1(#2,#3,#4)}}
\def\PPn{{\PP^{n_1} \times \cdots  \times \PP^{n_t}}}
\newcommand\rcell{\cellcolor{black!10}}
\def\Ddots{\mathinner{\mkern1mu\raise\p@
\vbox{\kern7\p@\hbox{.}}\mkern2mu
\raise4\p@\hbox{.}\mkern2mu\raise7\p@\hbox{.}\mkern1mu}}
\newdimen\x \x=12pt
\date{\today}
\title{Horrocks splitting on Segre-Veronese varieties }
\author{Frank-Olaf Schreyer
}
\begin{document}

\maketitle

\begin{abstract} 
\end{abstract}
We prove an analogue of Horrocks' splitting theorem for Segre-Veronese varieties building upon the theory 
of Tate resolutions on products of projective spaces. 
\section*{Introduction}

Horrocks' famous splitting theorem \cite{horrocks} on $\PP^n$ says that a vector bundle $\sF$ on $\PP^n$ splits into a direct sum of line bundles
$$\sF \cong \oplus_j \sO(k_j)$$ if and only if $\sF$ has no intermediate cohomology, i.e. if
$$H^i(\PP^n,\sF(k))=0\;  \forall k \in \ZZ \hbox{ and } \forall i \hbox{ with } 0< i <n.$$
In this note we prove a similar criterion for Segre-Veronese varieties $$\PP^{n_1}\times \ldots \times \PP^{n_t} \hookrightarrow \PP^N$$ embedded
by the complete linear system of a very ample line bundle  $\sO(H)=\sO(d_1,\ldots,d_t)$, so $N= (\prod_{j=1}^t {n_j+d_j \choose n_j}) -1$. 

\begin{theorem}\label{main} Let $\sO(H) = \sO(d_1,\ldots,d_t)$ be a very ample line bundle on 
a product of projective spaces $\PP=\PP^{n_1}\times \ldots \times \PP^{n_t}$ of dimension $m=n_1 +\ldots+n_t$ with $t \ge 2$ factors.
A torsion free sheaf $\sF$ on $\PP$ splits into a direct sum $\sF\cong \oplus_j \sO(k_jH)$ if and only if
$$H^i(\PP,\sF(a_1,\ldots,a_t))=0\; \forall i \hbox{ with } 0<i<m$$
for all twists with $\sO(a_1,\ldots,a_t)$ such that  the  cohomology groups $H^i(\PP,\sO(kH)\tensor \sO(a_1,\ldots,a_t)) \hbox{ for all } i \hbox{ with } 0<i<m$ and all $k \in \ZZ $ vanish.
\end{theorem}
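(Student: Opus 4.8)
The plan is to prove the nontrivial (``if'') direction by means of the $\ZZ^t$-graded Tate resolution $T(\sF)$ over the tensor product exterior algebra $E = E_1 \otimes \cdots \otimes E_t$, where $E_j = \bigwedge V_j$ with $\dim V_j = n_j+1$. The ``only if'' direction is immediate: for $\sF \cong \bigoplus_j \sO(k_j H)$ cohomology is additive, so the prescribed vanishing holds by the very definition of the admissible twists. For the converse I would first record how the hypothesis reads on $T(\sF)$: the free summands attached to a multidegree $a$ record the groups $H^i(\sF(a))$, each placed in the homological position determined by $i$ and contributing rank $\dim_k H^i(\sF(a))$. The vanishing of all intermediate cohomology over the admissible twists then says exactly that, throughout the corresponding region $R \subset \ZZ^t$, only the extreme groups $H^0$ and $H^m$ contribute generators to $T(\sF)$.

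The second step is to pin down $R$ and the Tate resolution of a line bundle. By K\"unneth, $\sO(c_1,\ldots,c_t)$ has nonvanishing intermediate cohomology precisely when every $c_j$ avoids the acyclic band $\{-n_j,\ldots,-1\}$ while the set $\{j : c_j \le -n_j-1\}$ is a proper nonempty subset; hence the admissible twists $a$ are those for which the whole line $\{a+kH : k \in \ZZ\}$ meets only the all-nonnegative, all-$\le -n_j-1$, or acyclic configurations. This describes $R$ as an explicit band around the line $\ZZ\cdot H$. I would then write down $T(\sO(kH))$ explicitly as the direct sum of its two extreme strands (from $H^0$ and $H^m$) glued by the socle map, a complex that is ``linear'' in the product sense, and observe that a finite direct sum $\bigoplus_j T(\sO(k_j H))$ is characterized among Tate complexes by having all generators in the extreme positions over $R$ with differentials of the correspondingly minimal multidegrees.

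The heart of the argument, and the step I expect to be the main obstacle, is to show that the extreme-cohomology constraint over $R$ forces $T(\sF)$ to split as such a direct sum of line-bundle strands. On a single $\PP^n$ the analogue is immediate: no intermediate cohomology for all twists makes the Tate resolution linear, and a minimal linear complex over one exterior algebra is automatically a direct sum of shifts of the Cartan resolution. On a product this is genuinely harder, because the admissible twists form only the band $R$ rather than all of $\ZZ^t$, and the $t$-fold K\"unneth structure permits differentials (products of exterior forms, one per factor) that could a priori link the $H^0$- and $H^m$-strands in a nonsplit way. The task is therefore to propagate the extreme behavior on $R$ across the entire complex and to peel off the strands one multidegree at a time, using minimality of $T(\sF)$ and the precise geometry of $R$ relative to $H$ to show that every generator of $H^0(\sF(a))$, with $a$ on the inner boundary of $R$, extends uniquely to a full line-bundle strand and that these exhaust $T(\sF)$.

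Finally, once $T(\sF) \cong \bigoplus_j T(\sO(k_j H))$ is established, I would invoke the faithfulness of the Tate-resolution functor for products: the sheafification of $T(\sF)$ recovers $\sF$, while the sheafification of the right-hand side is $\bigoplus_j \sO(k_j H)$, yielding $\sF \cong \bigoplus_j \sO(k_j H)$. This also disposes of the torsion-free hypothesis, since the reconstructed sheaf is a direct sum of line bundles and hence locally free; alternatively one checks directly that the cohomological constraints force $\sF$ to be reflexive before running the strand decomposition.
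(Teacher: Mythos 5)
You have set up the right framework (Tate resolutions on products, the band $R$ of admissible twists, the goal of splitting off line-bundle strands), and the ``only if'' direction is indeed trivial, but the proposal leaves the actual theorem unproved: the step you yourself flag as ``the main obstacle'' --- that extreme-only cohomology over $R$ forces $T(\sF)$ to split into strands of line bundles --- \emph{is} the content of the theorem, and you offer only the intention to ``propagate'' and ``peel off,'' with no mechanism. The paper's proof rests on two concrete devices absent from your sketch. First, the induction is anchored not at $H^0$-generators on the inner boundary of $R$, as you propose, but at an extremal $H^m$-position: using the strand complexes $T_a(\emptyset,\{1,\ldots,t\}\setminus\{j\},\emptyset)$ and minimality (Lemma \ref{on strands}), Proposition \ref{extremal Hm} shows that a maximal twist with $H^m\neq 0$ must have the diagonal form $(kd_1-n_1-1,\ldots,kd_t-n_t-1)$, i.e.\ sit at $kH+K_\PP$. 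Second, one passes to the corner complex at $c=(kd_1-n_1,\ldots,kd_t-n_t)$: since each quadrant-crossing map has entries in $\Lambda^{n_j+1}V_j$ and $\Lambda^{n_j+2}V_j=0$, the corner map out of $\Hom_K(E,H^m(\sF(kH)\otimes\omega_\PP))$ can land only in $\Hom_K(E,H^0(\sF(kH)))$, with entries in the \emph{one-dimensional} space $\Lambda^{m+t}V$. This is what makes the splitting a finite linear-algebra problem: an explicit sequence of row and column operations over $E$, justified by minimality and exactness of the corner complex, brings the relevant submatrix to the block form $\begin{pmatrix} v & 0 \cr 0 & \varphi' \end{pmatrix}$, splitting off the summand corresponding to $T_{\Rsh\kern -1pt c}(\sO(kH))$; the Beilinson functor $\bU$ then converts this into $\sF\cong\sO(kH)\oplus\sF'$ (Proposition \ref{summand}), and induction on the rank finishes. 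Without these (or comparable) mechanisms, your outline does not bridge the gap it identifies.

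A second, concrete error: your closing claim that the argument ``disposes of the torsion-free hypothesis'' cannot be right, and it exposes why strand-peeling from the cohomological hypothesis alone must fail. A skyscraper sheaf $\sO_p$ has $H^i(\sO_p(a))=0$ for all $i>0$ and all $a\in\ZZ^t$, so it satisfies the vanishing hypothesis vacuously, yet it is not a direct sum of line bundles. Torsion-freeness enters the paper's proof as genuine input at two points: it guarantees $H^m(\sF(kH)\otimes\omega_\PP)\neq 0$ for $k\ll 0$, so that an extremal $H^m$-position exists at all and the corner-complex argument has a starting generator, and it forces $\sF'=0$ in the rank-one case at the end of the induction. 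Any completion of your plan must invoke the hypothesis at the corresponding points rather than attempt to recover it from the reconstruction.
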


We can rephrase the theorem as follows: {\it If a torsion free sheaf on a product $\PPn$ has no intermediate cohomology in the range where the sheaves $\sO(kH)$ have no intermediate cohomology, then it is a direct sum of these sheaves.}

\begin{example} For $\PP=\PP^{n_1}\times \PP^{n_2}$ the line bundle $\sO(a_1,a_2)$ has nonzero cohomology for $a=(a_1,a_2)$ in the range 
$\{ a_1\ge 0, a_2\ge 0\}, \{a_1 < -n_1, a_2 \ge 0 \}, \{ a_1 \ge 0, a_2 < -n_2 \} \hbox { and }\{ a_1 < -n_1, a_2 < -n_2 \}$
where
$
H^0(\sO(a_1,a_2))\not=0,H^{n_1}(\sO(a_1,a_2))\not=0,H^{n_2}(\sO(a_1,a_2))\not=0,\hbox{ respectively } H^{m}(\sO(a_1,a_2))\not=0.
$

In particular for $\PP^{2}\times \PP^{3}$ and the range $\{-5 \le a_1 \le 1,-5 \le a_2 \le 2 \}$ nonzero cohomology and nonzero intermediate cohomology
occurs in the shaded areas 
$$
\begin{tabular}{|c|c|c|c|c|c|c|c|c}\hline
\rcell &\rcell & \rcell & & &\rcell & \rcell \cr\hline
\rcell &\rcell & \rcell & & &\rcell & \rcell \cr\hline
\rcell &\rcell & \rcell & & &\rcell & \rcell \cr\hline 
         &         &         & & &         &         \cr\hline
         &         &         & & &         &         \cr\hline
         &         &         & & &         &         \cr\hline 
\rcell &\rcell & \rcell & & &\rcell & \rcell \cr\hline
\rcell &\rcell & \rcell & & &\rcell & \rcell \cr\hline
\end{tabular} \hbox{ and }
\begin{tabular}{|c|c|c|c|c|c|c|c|c}\hline
\rcell &\rcell & \rcell & & & &  \cr\hline
\rcell &\rcell & \rcell & & & & \cr\hline
\rcell &\rcell & \rcell & & &   &  \cr\hline
         &         &         & & &         &         \cr\hline
         &         &         & & &         &         \cr\hline
         &         &         & & &         &         \cr\hline
         &         &         & & &\rcell & \rcell \cr\hline
          &        &         & & &\rcell & \rcell \cr\hline
\end{tabular}  \hbox{ respectively. }
$$
Thus for $\sO(H)=\sO(4,2)$ the assumption of the theorem  in this case is, that the intermediate cohomology occurs only in an area
 as indicated  below:
 $$
\begin{tabular}{|c|c|c|c|c|c|c|c|c|c|c|c|c|c|c|c|c|c|c|c|c|c|c|c|c|c|c|c|} \hline
\rcell &\rcell & \rcell & \rcell& \rcell&\rcell & \rcell &\rcell &\rcell & \rcell & \rcell& \rcell&\rcell & \rcell &\rcell &\rcell & \rcell &\rcell & & &  \cr\hline
\rcell& \rcell&\rcell & \rcell &\rcell &\rcell & \rcell & \rcell& \rcell&\rcell & \rcell &\rcell &\rcell & \rcell& & && & & &\cr \hline
\rcell& \rcell&\rcell & \rcell &\rcell &\rcell & \rcell  &\rcell& \rcell&\rcell & \rcell &\rcell &\rcell & \rcell& & && & & &\cr \hline
\rcell& \rcell&\rcell & \rcell &\rcell &\rcell & \rcell  &\rcell& \rcell&\rcell &  & & & & & & && & & \cr \hline
\rcell& \rcell&\rcell & \rcell &\rcell &\rcell & \rcell  &\rcell& \rcell&\rcell &  & & & & & & && & &\rcell \cr \hline
\rcell& \rcell&\rcell & \rcell &\rcell &\rcell & & & & &  & & & & & & && & &\rcell \cr \hline
\rcell& \rcell&\rcell & \rcell &\rcell &\rcell & & && &  & & & & &  & \rcell & \rcell &\rcell &\rcell&\rcell \cr \hline
\rcell& \rcell& & & & & & & & &  & & & & &  & \rcell & \rcell &\rcell &\rcell&\rcell \cr \hline
\rcell& \rcell& & & & & & & & &  & &  \rcell & \rcell &\rcell &\rcell & \rcell & \rcell &\rcell &\rcell&\rcell \cr \hline
& & & & & & & & & &  & &  \rcell & \rcell &\rcell &\rcell & \rcell & \rcell &\rcell &\rcell&\rcell \cr \hline
& & & & & & & &  \rcell & \rcell &\rcell &\rcell &  \rcell & \rcell &\rcell &\rcell & \rcell & \rcell &\rcell &\rcell&\rcell \cr \hline
\end{tabular}
$$
\end{example}
\section{Preliminaries and Notation}

 The Tate resolutions of a sheaf on products of projective spaces in a generalization of the Tate resolution on $\PP^n$ \cite{EFS}.
 We recall from \cite{EES} the basic notation.

Let $\PP=\PPn =  \PP(W_{1})\times\cdots\times \PP(W_{t})$ be a product of $t$ projective spaces over an arbitrary field $K$. Set $V_{i} = W_{i}^{*}$ and  $V = \oplus_{i}V_{i}$. Let $E$ be the $\ZZ^{t}$-graded exterior algebra on $V$, where elements of $V_{i}\subset E$ have degree $(0,\dots,0, -1, 0,\dots,0)$ with $-1$ in the $i$-th place.

For a sheaf $\sF$ on $\PP$ the Tate resolution $\bT(\sF)$ is a minimal exact complex of graded $E$-modules with terms 
$$
\bT(\sF)^d = \oplus_{a\in \ZZ^t} \Hom_K(E,H^{d-|a|}(\PP, \sF(a))),
$$
where the cohomology group $H^{d-|a|}(\PP,\sF(a))$ is regarded as a vector
space concentrated in degree $a$ and $|a|= \sum_{j=1}^t a_j$ denotes the total degree.

Since $\omega_E = \Hom_K(E,K)$ is the free $E$-module of rank 1 with socle in degree $0$ and hence generator in degree 
$(n_1+1,\ldots,n_t+1)$, the differential of the complex $\bT(\sF)$ is given by a matrix with entries in $E$. More precisely,
the component $\Hom_K(E,H^{d-|a|}(\PP, \sF(a))) \to \Hom_K(E,H^{d+1-|b|}(\PP, \sF(b)))$ is given by a
$h^{d+1-|b|}(\PP, \sF(b)) \times h^{d-|a|}(\PP, \sF(a))$-matrix with
entries in 
$$ \Lambda^{b-a}V := \Lambda^{b_1-a_1} V_1 \tensor \ldots \tensor \Lambda^{b_t-a_t} V_t.$$
In particular, all blocks corresponding to cases where  $b_j< a_j$ for some $j$ are zero. Moreover, all blocks to cases with $a=b$ are also zero, since $\bT(\sF)$ is a minimal complex.

The complex $\bT(\sF)$ has various exact free subquotient complexes: For $c\in \ZZ^t$ a degree and $I, J, K \subset \{1,\ldots, t \}$ disjoint subsets we have the subquotient complex $T_c(I,J,K)$ with terms
$$
\Tate c I J K ^d= \sum_{a \in \ZZ 
          \atop {a_i < c_i \hbox{ \scriptsize for } i \in I
         \atop {a_i  = c_i \hbox{ \scriptsize for } i \in J
         \atop a_i  \ge c_i \hbox{ \scriptsize for } i \in K}}} 
          \Hom_K(E,H^{d-|a|}(\PP,\sF(a)))
$$
By \cite[Theorem 3.3 and Corollary 3.5]{EES} these complexes are exact as long as $I \cup J \cup K \subsetneq \{1,\ldots,t \}$. 
The complexes $\Tate c \o J \o$ can be used to compute the direct image  complex of $\sF(c)$ along a partial projection
$\pi_J \colon \PP \to \prod_{j \notin J } \PP^{n_j}$ \cite[Corollary 0.3 and Proposition 3.6]{EES}.

\begin{lemma}\label{on strands} Let $\sF$ be a coherent sheaf on a product of projective spaces $\PP= \PP^{n_1} \times \ldots \times \PP^{n_t}$ and let
$a=(a_1,a_2,\ldots, a_n)=(a',a_t) \in \ZZ^t= \ZZ^{t-1} \times \ZZ^{}$ and $n\in \ZZ$.
If $$H^n(\sF(a',a_t))=H^{n-1}(\sF(a',a_t+1))=\ldots=H^{n-n_t}(\sF(a',a_t+n_t))=0$$   then
$H^n(\sF(a',a_t-1))=0$ as well. 
A similar statement holds for the cohomology along the $j$-th strand $T_a(\o,{\{1,\ldots,t\}\setminus \{j\}},\o)$.
\end{lemma}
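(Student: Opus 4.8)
\section*{Proof proposal}

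The plan is to read the vanishing off the $t$-th strand complex $C^\bullet:=\Tate a \o {\{1,\ldots,t-1\}} \o$, whose index set $\{1,\ldots,t-1\}$ is a proper subset of $\{1,\ldots,t\}$, so that $C^\bullet$ is exact by \cite[Theorem 3.3 and Corollary 3.5]{EES}; moreover it is minimal, being a subquotient of the minimal complex $\bT(\sF)$. Writing $|a'|=a_1+\cdots+a_{t-1}$, its term in cohomological degree $d$ is $\bigoplus_{b_t\in\ZZ}\Hom_K(E,H^{d-|a'|-b_t}(\PP,\sF(a',b_t)))$, so the summand carrying $H^i(\PP,\sF(a',b_t))$ sits in cohomological degree $i+b_t+|a'|$ and in internal degree $(a',b_t)$. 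In particular all groups appearing in the hypothesis, together with the target group $H^n(\sF(a',a_t-1))$, arrange themselves along the two consecutive cohomological degrees $d_0-1$ and $d_0$ with $d_0=n+a_t+|a'|$: the hypothesis groups $H^{n-k}(\sF(a',a_t+k))$ all lie in degree $d_0$, while $H^n(\sF(a',a_t-1))$ lies in degree $d_0-1$.

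First I would isolate the free summand $U:=\Hom_K(E,H^n(\PP,\sF(a',a_t-1)))$, which lies in $C^{d_0-1}$ in internal degree $(a',a_t-1)$, and analyse the differential leaving it. A component $U\to \Hom_K(E,H^{i''}(\sF(a',b''_t)))$ of $C^{d_0-1}\to C^{d_0}$ has entries in $\Lambda^{b''_t-(a_t-1)}V_t$, the first $t-1$ exterior factors being $\Lambda^0 V_i=K$ because those coordinates are frozen at $a'$. Minimality forces the exponent to be at least $1$, and $\Lambda^k V_t=0$ for $k>n_t+1$ forces it to be at most $n_t+1$; hence $b''_t\in\{a_t,\ldots,a_t+n_t\}$ and correspondingly $i''=n+a_t-b''_t\in\{n-n_t,\ldots,n\}$. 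Thus every target reachable from $U$ is one of the groups $H^{n}(\sF(a',a_t)),\ldots,H^{n-n_t}(\sF(a',a_t+n_t))$, all of which vanish by hypothesis, so the restriction of the differential to $U$ is zero and $U\subseteq\ker(C^{d_0-1}\to C^{d_0})$.

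Finally I would invoke exactness and minimality together. Exactness at $C^{d_0-1}$ gives $U\subseteq\ker(C^{d_0-1}\to C^{d_0})=\image(C^{d_0-2}\to C^{d_0-1})$, while minimality gives $\image(C^{d_0-2}\to C^{d_0-1})\subseteq\mathfrak{m}\,C^{d_0-1}$, where $\mathfrak{m}$ denotes the graded maximal ideal of $E$. If $U$ were nonzero its generator would then be a minimal generator of the free module $C^{d_0-1}$ lying inside $\mathfrak{m}\,C^{d_0-1}$, contradicting Nakayama; hence $U=0$, i.e. $H^n(\sF(a',a_t-1))=0$. Applying the identical argument to the strand $\Tate a \o {\{1,\ldots,t\}\setminus\{j\}}\o$ gives the statement for the $j$-th coordinate. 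The one point requiring care — and the reason the naive count valid on a single $\PP^{n}$ does not transcribe verbatim — is that here the whole term $C^{d_0}$ need \emph{not} vanish, since on the product $H^i(\sF(a',b_t))$ can be nonzero for $i$ far outside $[n-n_t,n]$; the argument goes through precisely because only the $n_t+1$ summands reachable from $U$ (those with $1\le b''_t-(a_t-1)\le n_t+1$) are relevant, and these are exactly the ones the hypothesis controls.
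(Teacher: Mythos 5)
Your proposal is correct and follows essentially the same route as the paper: you work in the same strand $T_a(\o,\{1,\ldots,t-1\},\o)$, use the exterior-degree constraints ($\Lambda^0$ excluded by minimality, $\Lambda^{>n_t+1}V_t=0$) to show the differential out of $\Hom_K(E,H^n(\sF(a',a_t-1)))$ lands only in the summands killed by hypothesis, and conclude from exactness plus minimality that the source vanishes. Your write-up merely makes explicit what the paper compresses into ``minimal and exact implies the source is zero,'' namely the graded Nakayama step, and correctly flags that only the $n_t+1$ reachable summands of the next term need to vanish.
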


\begin{proof} We consider the strand $T_a(\o,\{1,\ldots,t-1\},\o)$ of $\bT(\sF)$. The differential
starting at the summand $\Hom_K(E,H^n(\sF(a',a_t-1)) \subset T_a(\o,\{1,\ldots,t-1\},\o)$ maps in the strand  to the summands 
$$
\Hom_K(E, H^n(\sF(a',a_t)))\oplus  \ldots \oplus \Hom_K(E,H^{n-n_t}(\sF(a',a_t+n_t))).
$$ 
 By assumption the target is zero. Since of $T_a(\o,\{1,\ldots,t-1\},\o)$ is minimal and exact, the source
is zero as well. 
\end{proof}

The proof of the our main theorem uses the corner complexes $T_{\Rsh \kern -1pt c}(\sF)$
which are defined as the cone of a map of complexes
$$
\varphi_{c}: T_c(\{1,\ldots,t\},\emptyset,\emptyset)[-t] \to T_c(\emptyset,\emptyset,\{1,\ldots,t\}) 
$$
obtained as the composition of $t$ maps
$$
T_c(\{1,\ldots,k\},\emptyset,\{k+1,\ldots,t\})[-k] \to  T_c(\{1,\ldots,k-1\},\emptyset,\{k,\ldots,t\})[-k+1] 
$$
each of which is obtained from the differential of $\bT(\sF)$ by taking the terms with source in one quadrant and target in the next quadrant. The corner complexes are exact as well by \cite[Theorem 4.3 and Corollary 4.5]{EES}. 

If we follow a path from the last quadrant to the first quadrant using a different order of the elements in the set 
$\{1,\ldots,t \}$, we obtain an isomorphic complex, since both walk around the corner complexes coincide
in sufficiently large cohomological degree $d$.

\section{Proof of the main result}

We use the partial order $a \ge b$ on $\ZZ^t$ defined by $a_j \ge b_j \hbox{ for } j=1,\ldots, t$ and write $a > b$ if $a \ge b$ and $a \not=b$.

Let $\sF$ be a coherent sheaf on $\PP=\PPn$.
If $H^m(\PP,\sF(a)) \not=0$ then $H^m(\PP,\sF(b)) \not=0$ for all $b \le a$ as we see from applying $H^m$ to the surjection 
$$ H^0(\PP,\sO(a-b)) \otimes \sF(b) \to \sF(a).$$

 An {\it extremal $H^m$-position} of $\sF$ is an degree $a\in \ZZ^t$ such that
$H^m(\PP,\sF(a)) \not= 0$ but $H^m(\PP,\sF(c))=0$ for all $c > a$.

\begin{proposition}\label{extremal Hm} Let $\sF$ be a torsion free sheaf on $\PPn$ satisfying the assumption of 
Theorem \ref{main} with respect to
$\sO(H) = \sO(d_1,\ldots,d_t)$. There exists an extremal $H^m$-position for $\sF$ of the form
$$ (a_1,\ldots,a_t)=(kd_1-n_1-1,\ldots,kd_t-n_t-1) $$
for some $k \in \ZZ$.
\end{proposition}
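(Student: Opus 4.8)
The plan is to locate the extremal position on the \emph{diagonal} of degrees $a^{(k)} := (kd_1-n_1-1,\ldots,kd_t-n_t-1)$, and to argue in three stages: show these degrees (and a band around them) lie in the region where the hypothesis applies; single out a threshold value $k_0$; and then prove $a^{(k_0)}$ is extremal. For the first stage I would record, via Künneth, that $\sO(b_1,\ldots,b_t)$ has no intermediate cohomology exactly when all $b_j\ge 0$, or all $b_j\le -n_j-1$, or some $b_j\in[-n_j,-1]$. Applying this to $\sO(k'H)\tensor\sO(a^{(k)})=\sO(\ell d_1-n_1-1,\ldots)$ with $\ell=k+k'$, one checks the cases $\ell\le 0$ (all entries $\le -n_j-1$) and $\ell\ge 1$ (all entries $\ge -n_j$, hence each either $\ge 0$ or in the gap): in both there is no intermediate cohomology. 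So every $a^{(k)}$ satisfies the hypothesis of Theorem~\ref{main}, giving $H^i(\PP,\sF(a^{(k)}))=0$ for $0<i<m$ and all $k$. The same computation applied to $a^{(k)}+se_j$ (with $e_j$ the $j$-th unit vector) shows these degrees still lie in the hypothesis region for $0\le s\le n_j$, the binding constraint coming from $\ell=0$; this band of intermediate vanishing around the diagonal is what Stage~3 needs.

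For the second stage, since $\sF$ is torsion free we have $H^m(\PP,\sF(a))\ne 0$ for $a\ll 0$ (by Serre duality $H^m(\PP,\sF(a))=\Hom(\sF,\omega_\PP(-a))^*$ and $\sF^\vee\ne 0$), while $H^m(\PP,\sF(a))=0$ for $a\gg 0$. The surjection $H^0(\sO(a-b))\tensor\sF(b)\to\sF(a)$ for $a\ge b$ makes $\{a:H^m(\PP,\sF(a))\ne 0\}$ a lower set, and since $a^{(k)}<a^{(k+1)}$ the set of $k$ with $H^m(\PP,\sF(a^{(k)}))\ne 0$ is an interval $(-\infty,k_0]$. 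Put $a^*:=a^{(k_0)}$, so $H^m(\PP,\sF(a^*))\ne 0$ while $H^m(\PP,\sF(a^*+H))=0$. Note $\omega_\PP(-a^*)=\sO(-k_0H)$, so Serre duality already produces a nonzero map $\sF\to\sO(-k_0H)$, which is exactly why the diagonal form matters for the main theorem.

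For extremality I must show $H^m(\PP,\sF(c))=0$ for all $c>a^*$. By the lower set property, any $c\ge a^*+H$ is immediate; and if some $c>a^*$ had $H^m\ne 0$, then picking $j$ with $c_j>a^*_j$ gives $a^*+e_j\le c$, whence $H^m(\PP,\sF(a^*+e_j))\ne 0$. So it suffices to prove $H^m(\PP,\sF(a^*+e_j))=0$ for each $j$. For this I would propagate top-cohomology vanishing down the $j$-th strand $T_a(\emptyset,\{1,\ldots,t\}\setminus\{j\},\emptyset)$ by repeated use of Lemma~\ref{on strands} with $n=m$, starting in the range $a_j\gg 0$, where relative Serre vanishing along the projection forgetting the $j$-th factor kills $H^m,\ldots,H^{m-n_j+1}$. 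Each descent step needs the intermediate groups $H^{m-i}(\sF(a^*+se_j))$, $1\le i\le n_j$, to vanish, and those with $s\le n_j$ are furnished by the band of Stage~1.

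The main obstacle is the sub-top group $H^{m-n_j}$ along the strand: its relevant twists fall just outside the band and are not controlled by the hypothesis. I expect this to be resolved using the exactness of the corner complex $\cornerT{c}(\sF)$ turned at the diagonal corner $c=a^*+H$ — exploiting that walking around the corner in different coordinate orders yields isomorphic complexes — which links $H^{m-n_j}$ back to the $H^m$ and $H^0$ strands already under control; equivalently, by an induction on the number of factors $t$ in which the $j$-th strand is read as the Tate resolution of the direct image of $\sF$ on the product of the remaining factors, where the threshold structure of Stage~2 recurs. Closing this one point is where the real work lies; everything else is the bookkeeping above.
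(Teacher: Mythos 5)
Your Stages 1 and 2 are correct and essentially match the paper's setup: the K\"unneth description of the region where $\sO(kH)$-twists have no intermediate cohomology, the threshold $k_0$ along the diagonal coming from torsion-freeness and Serre duality, and the lower-set property of $\{a : H^m(\PP,\sF(a))\ne 0\}$, which correctly reduces extremality to the vanishing of $H^m(\sF(a^*+e_j))$ for each $j$. The genuine gap is in Stage 3, and it is exactly the one you flag yourself. To kill $H^m(\sF(a^*+e_j))$ by descent along the $j$-th strand, each application of Lemma~\ref{on strands} at shift $s$ consumes the inputs $H^{m-i}(\sF(a^*+(s+1+i)e_j))=0$ for $i=1,\ldots,n_j$; already the \emph{last} step $s=1$ needs $H^{m-n_j}(\sF(a^*+(n_j+2)e_j))=0$, and the full descent from $s\gg 0$ needs intermediate vanishing at every shift $u\ge 3$. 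But for $u\ge n_j+1$ the twist $a^*+ue_j$ lies outside the hypothesis region of Theorem~\ref{main}: at $k'=-k_0$ the line bundle $\sO(k'H+a^*+ue_j)$ has coordinates $-n_{j'}-1$ for $j'\ne j$ and $u-n_j-1\ge 0$ in the $j$-th place, hence nonzero $H^{m-n_j}$. Since the hypothesis is all-or-nothing per twist, it then controls \emph{no} intermediate group of $\sF$ at those degrees (the failure is broader than the single group $H^{m-n_j}$ you name). Your finite band $0\le s\le n_j$ cannot feed an unbounded descent — indeed it does not even cover one full lemma application — and your proposed repairs (corner complex turned at $c=a^*+H$, or induction on $t$ via direct images) are left as hopes, not arguments; the corner complexes in fact enter the paper only later, in Proposition~\ref{summand}.

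The paper closes precisely this point by a different maneuver, which is worth internalizing: it never proves vanishing at $a^*+e_j$ by descent from infinity. Instead, assuming $a^{(k)}$ (with $k$ maximal such that $H^m(\sF(kH)\tensor\omega_\PP)\ne 0$) is not extremal, it picks a degree $a$ that is \emph{maximal} with $H^m(\sF(a))\ne 0$ in the box between $a^{(k)}$ and $a^{(k+1)}$. Maximality hands over $H^m(\sF(a+se_j))=0$ for all $s\ge 1$ for free, so a \emph{single} application of Lemma~\ref{on strands} per direction suffices, and its finitely many intermediate inputs $a+2e_j,\ldots,a+(n_j+1)e_j$ stay within the good range precisely as long as $a_j<(k+1)d_j-n_j-1$ (this is where the choice of a direction $j\ne i$, with $a_i>kd_i-n_i-1$, is exploited). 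The resulting contradiction $H^m(\sF(a))=0$ forces $a_j=(k+1)d_j-n_j-1$ for every $j$, i.e.\ $a=a^{(k+1)}$, contradicting the maximality of $k$. In short: replacing your direct vanishing argument at the fixed diagonal point by a contradiction at an arbitrary maximal violating degree shrinks the unbounded family of uncontrolled twists down to a bounded window inside the band — that is the idea your proposal is missing, and without it the proof is incomplete.
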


Note that $\sO(-n_1-1,\ldots,-n_t-1) \cong \omega_\PP$ is the canonical sheaf on $\PP$.

\begin{proof} Since $\sF$ is nonzero and torsion free, we have
$H^m(\sF(kH)\tensor \omega_\PP) \not=0$ for $k\ll 0$ and $H^m(\sF(kH) \tensor \omega_\PP) =0$ for $k \gg 0$.
Let $k$ be the maximum such that $H^m(\sF(kH)\tensor \omega_\PP) \not=0$. We claim that this is an extremal $H^m$-position.
Suppose it is not. Then there exists a maximal $a$ in the range
$$(kd_1-n_1-1, \ldots,kd_t-n_t-1) < a \le (k+1)d_1-n_1-1, \ldots,(k+1)d_t-n_t-1)$$
such that $H^m(\sF(a)\tensor \omega_\PP) \not=0$. At least for one $i$ we have $kd_i-n_i-1 < a_i$. Then for $j \not=i$ we consider $J=\{1,\ldots,t\}\setminus \{j\}$ and look at the $j$-th
strand $\Tate a \o J \o $ through $a$.  Lemma \ref{on strands} implies $a_j=(k+1)d_j-n_j-1$: If  $a_j<(k+1)d_j-n_j-1$,
then we cannot reach the intermediate cohomology range of $\sF$ after at most $n_j+1$ steps along this strand, contradicting  the extremality of $a$. Interchanging the role of $i$ and $j$, we get
 $a_i=(k+1)d_i-n_i-1$ for all $i$, which contradicts the maximality of $k$.
\end{proof}

\begin{proposition}\label{summand}  Let $\sF$ be a torsion free sheaf on $\PPn$ satisfying the assumption of Theorem \ref{main} with respect to
$\sO(H) = \sO(d_1,\ldots,d_t)$. If $$(kd_1-n_1-1,\ldots,kd_t-n_t-1)$$ is an extremal $H^m$-position for $\sF$, then
$$ \sF\cong \sO(kH) \oplus \sF'.$$
\end{proposition}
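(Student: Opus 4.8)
The plan is to realize $\sO(kH)$ as a direct summand by producing a projection $\beta\colon\sF\to\sO(kH)$ together with a section $\alpha\colon\sO(kH)\to\sF$ whose composite $\beta\circ\alpha$ is nonzero. Since $\Hom_\PP(\sO(kH),\sO(kH))=H^0(\PP,\sO_\PP)=K$, any nonzero composite is a nonzero scalar, so that $\sF\cong\sO(kH)\oplus\ker\beta$ with $\sF':=\ker\beta$ automatically torsion free as a subsheaf of $\sF$. The projection is immediate: by Serre duality the extremal nonvanishing $H^m(\PP,\sF(a))\neq0$ at the degree $a=(kd_1-n_1-1,\dots,kd_t-n_t-1)$, whose Serre dual involves $\sO(kH)$, yields a nonzero homomorphism $\beta\colon\sF\to\sO(kH)$. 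The entire difficulty is therefore to produce the section $\alpha$, equivalently to show that the map $H^0(\PP,\sF(-kH))\to H^0(\PP,\sO_\PP)=K$ induced by $\beta$ is nonzero; once a class mapping to $1$ is found, splitting is formal.

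A first simplification, which I would isolate at the outset, is that the balanced line bundles $\sO(\ell H)=\sO(\ell d_1,\dots,\ell d_t)$ never carry intermediate cohomology: by Künneth a nonzero contribution to $H^i(\PP,\sO(\ell H))$ with $0<i<m$ would require one tensor factor in $H^0(\PP^{n_j},\sO(\ell d_j))$, forcing $\ell\ge0$, and another in $H^{n_{j'}}(\PP^{n_{j'}},\sO(\ell d_{j'}))$, forcing $\ell<0$, which is absurd. Hence every balanced degree $cH$ lies in the range of the hypothesis of Theorem \ref{main}, so that $H^i(\PP,\sF(cH))=0$ for all $0<i<m$ and all $c\in\ZZ$. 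In particular $H^1(\PP,\sF(-kH))=0$, since $m\ge t\ge 2$. This diagonal vanishing is the decisive cohomological input: it shows that the only possible obstruction to splitting $\beta$ lives in the $H^1$ of its kernel along the balanced strand, and it is precisely what should allow the extremal $H^m$-class to be converted into a section.

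The heart of the argument is producing the section, and here I would invoke the corner complex $\cornerT{a}(\sF)$ at the extremal corner $a$. Its purpose, being exact and minimal by \cite{EES}, is exactly to link the last quadrant, which carries the extremal class in $H^m(\sF(a))$, to the complementary region carrying the relevant $H^0$-cohomology of $\sF$. I would argue that the extremal $H^m$-class, which by minimality cannot be a boundary, must be matched around the corner by a generator on the $H^0$-side: the differentials leaving $H^m(\sF(a))$ land only in strands whose intermediate terms are annihilated by the hypothesis together with Lemma \ref{on strands}, so the only way exactness of $\cornerT{a}(\sF)$ can hold at that spot is for a corresponding free generator coming from $H^0(\PP,\sF(-kH))$ to be present and to pair nontrivially with it. This generator is the section $\alpha$, and the nontriviality of the pairing is exactly $\beta\circ\alpha\neq0$. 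Equivalently, one shows that the cokernel of $H^0(\sF(-kH))\to H^0(\sO_\PP)=K$, which by the vanishing $H^1(\sF(-kH))=0$ equals $H^1(\ker\beta\,(-kH))$, is killed by the exactness of the corner complex; structurally this is the statement that a whole copy of the Tate resolution $\bT(\sO(kH))$ splits off $\bT(\sF)$.

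I expect this last step to be the main obstacle. Everything else is formal: the projection comes from Serre duality, the balanced-strand vanishing is elementary Künneth, and splitting the short exact sequence is a one-line consequence once the $H^0$-map is onto. What is genuinely delicate is propagating the single extremal $H^m$-class all the way around the corner to a matching section, i.e. establishing that the extremal cohomology of $\sF$ really splits off an \emph{entire} copy of $\bT(\sO(kH))$ rather than merely a subquotient, and then descending that splitting of complexes of $E$-modules to a splitting of sheaves. Controlling the intermediate strands via the hypothesis and Lemma \ref{on strands}, so that no obstruction survives in $\cornerT{a}(\sF)$, is where the real work lies.
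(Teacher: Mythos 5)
You have correctly identified the paper's strategy --- work in the corner complex at the corner $c=(kd_1-n_1,\ldots,kd_t-n_t)$, show the corner map out of $\Hom_K(E,H^m(\sF(kH)\tensor\omega_\PP))$ can, for exterior-degree reasons ($\Lambda^{n_j+2}V_j=0$) combined with the vanishing hypothesis, only land in $\Hom_K(E,H^0(\sF(kH)))$ with entries in the one-dimensional space $\Lambda^{m+t}V$, split off an entire copy of the Tate resolution of the line bundle, and descend to sheaves via the Beilinson functor $\bU$ of \cite[Theorem 0.1]{EES}. Your formal reductions are also sound: Serre duality converts the extremal $H^m$-class into a nonzero projection $\beta$, the K\"unneth observation that balanced twists $\sO(\ell H)$ never have intermediate cohomology is correct, and once a section hits $1$ the splitting is indeed formal. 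But the step you explicitly defer --- ``the only way exactness of $\cornerT{a}(\sF)$ can hold at that spot is for a free generator from the $H^0$-side to pair nontrivially,'' together with your admission that splitting off an \emph{entire} copy of $\bT(\sO(kH))$ ``is where the real work lies'' --- is precisely the content of the proposition, and you do not supply it. So this is a correct plan with the central argument missing: a genuine gap.

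Concretely, exactness plus minimality of the corner complex only give that no generator maps to zero; since the sole target of the $H^m$-block is $\Hom_K(E,H^0(\sF(kH)))$ with constant entries (coefficients of a basis element $v\in\Lambda^{m+t}V$), this yields a scalar block of full column rank, in particular $h^0(\sF(kH))\ge h^m(\sF(kH)\tensor\omega_\PP)$. That is \emph{not} yet a splitting, because the same target block also receives columns with entries linear in $V$ coming from $\Hom_K(E,H^0(\sF(kH)\tensor\sB))$ with $\sB=\sO(-1,0,\ldots,0)\oplus\cdots\oplus\sO(0,\ldots,0,-1)$, and nothing said so far prevents these linear entries from obstructing a direct-sum decomposition --- exactly your ``entire copy versus subquotient'' worry. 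The paper closes this with an explicit exterior-algebra matrix argument: in the matrix $\varphi$ with diagonal $v$-block and linear entries $\ell_{ij}$, each linear entry in a pivot row is shown to be a $K$-linear combination of the linear entries below it (otherwise multiplying that column by a suitable $w\in\Lambda^{m+t-1}V$ annihilating the lower entries, with $\ell_{1j}w=v$, exhibits a pivot column as an $E$-linear combination of other columns, forcing a generator to map to zero and contradicting minimality and exactness); row operations then clear the pivot rows, and iterating over the columns brings $\varphi$ to the block form $\left(\begin{smallmatrix} v & 0 \\ 0 & \varphi' \end{smallmatrix}\right)$, so that $\cornerT{c}(\sO(kH))$ is a direct summand of $\cornerT{c}(\sF)$ and $\bU$ yields $\sF\cong\sO(kH)\oplus\sF'$. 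Without this, your cohomological chain does not go through; note also a local inaccuracy in it: identifying the cokernel of $H^0(\sF(-kH))\to H^0(\sO_\PP)=K$ with $H^1(\ker\beta(-kH))$ presupposes that $\beta$ is surjective as a map of sheaves, which is not known at that stage --- surjectivity of $\beta$ is essentially part of what the splitting argument must establish.
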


\begin{proof}  We consider the corner complex $T_{\Rsh \kern -1pt c}(\sF)$ for $c=(kd_1-n_1,\ldots,kd_t-n_t)$. The first part of the corner map
with source $\Hom_K(E,H^m(\sF(kH) \tensor \omega_\PP)$ is a map
$$
\Hom_K(E,H^m(\sF(kH) \tensor \omega_\PP)) \to \Hom_K(E,H^{m-n_t}(\sF\tensor \omega_\PP \tensor \sO(0,\ldots,0,n_t+1)))
$$
given by a matrix with entries in $\Lambda^{n_t+1} V_t$. Since $\Lambda^{n_t+2} V_t=0$, only the map to
$$
 \Hom_K(E,H^{m-n_t-n_{t-1}}(\sF\tensor \omega_\PP \tensor \sO(0,\ldots,0,n_{t-1}+1,n_t+1)))
$$
can be nonzero for the composition of the first two parts. Repeating this arguments, we conclude that the corner map with source 
$\Hom_K(E,H^m(\sF(kH) \tensor \omega_\PP)$ has an image only in $\Hom_K(E,H^0(\sF(kH))$. 
It is given by an
$$h^0(\sF(kH)) \times h^m(\sF(kH) \tensor \omega_\PP)\hbox{-matrix}$$ with entries in the one-dimensional space
$$\Lambda^{m+t} V =\Lambda^{n_1+1} V_1 \tensor \ldots \tensor \Lambda^{n_t+1} V_t.$$
Consider the submatrix of the differential in the corner complex with target equal to the summand $\Hom_K(E,H^0(\sF(kH))$. 
The only other subspaces in the source which have this target come from $H^0$-groups:
$$\Hom_K(E,H^0(\sF(kH)(-1,0,\ldots,0)), \ldots, \Hom_K(E,H^0(\sF(kH)(0,\ldots,0,-1)).$$
Thus this differential is given by an
$$h^0(\sF(kH)) \times [(h^m(\sF(kH)\tensor \omega_\PP) + h^0(\sF(kH)\otimes \sB) ) ]\hbox{-matrix} $$
with $\sB= \sO(-1,0,\ldots,0) \oplus \ldots \oplus \sO(0,\ldots,0,-1)$.
Note that $h^0(\sF(kH)) \ge h^m(\sF(kH)\tensor \omega_\PP)$, because otherwise a generator of 
$\Hom_K(E,H^m(\sF(kH) \tensor \omega_\PP)$ would map to zero which is impossible because $T_{\Rsh \kern -1pt c}(\sF)$ is exact and minimal. Thus in a suitable basis the matrix has shape
$$
\varphi=\begin{pmatrix}
v &         & 0 &\vrule &\ell_{1j} &\ldots & \ell_{1n}   \cr
   &\ddots &  &\vrule& \vdots   & & \vdots \cr
0 & & v & \vrule& \ell_{rj} & \ldots & \ell_{rj}   \cr \hline
  & &  &  \vrule&\ell_{r+1j} & \ldots & \ell_{r+1n}   \cr
 &0 & &   \vrule&\vdots   & & \vdots \cr
  & &  &  \vrule&\ell_{sj} & \ldots & \ell_{sn}   \cr
\end{pmatrix}
$$
with $v \in \Lambda^{m+t} V$ a fixed basis element and $ \ell_{ij} \in V_1 \cup \ldots \cup V_t$.

We claim now that $\ell_{1j}$ is a $K$-linear combination of $\ell_{r+1j}, \ldots, \ell_{sj}$. Indeed if not, we could multiply
the $j$-th column by an element $w \in \Lambda^{m+t-1} V$ which annihilates $\ell_{r+1j}, \ldots, \ell_{sj}$ such that $\ell_{1j}w =v$.
This would give us a column
$$
\begin{pmatrix}
v &         & 0 &\vrule &v  \cr
   &\ddots &  &\vrule& \vdots    \cr
0 & & v & \vrule& \lambda_r v &   \cr \hline
  & &  &  \vrule&0   \cr
 &0 & &   \vrule&\vdots    \cr
  & &  &  \vrule&0   \cr
\end{pmatrix}\
$$
for possibly zero scalars $\lambda_2, \ldots, \lambda_r$, and the first column would be an $E$-linear combination of columns $2$ to $j$.
This is impossible since no generator can map to zero in $T_{\Rsh \kern -1pt c}(\sF)$.
So after row operations we may assume that $\varphi$ has the shape
$$
\varphi=\begin{pmatrix}
v &         & 0 &\vrule &0 &\vrule&\ldots & \ell_{1n}   \cr \hline
   &\ddots &  &\vrule& \vdots&\vrule   & & \vdots \cr
0 & & v & \vrule& \ell_{rj} &\vrule& \ldots & \ell_{rn}   \cr \hline
  & &  &  \vrule&\ell_{r+1j} &\vrule& \ldots & \ell_{r+1n}   \cr
 &0 & &   \vrule&\vdots &\vrule  & & \vdots \cr
   & &  &  \vrule&\ell_{r_1j} &\vrule& \ldots & \ell_{r_1n}   \cr \hline
      & & &   \vrule&\vdots &\vrule & & \vdots \cr
  & &  &  \vrule&0 &\vrule& \ldots & \ell_{sn}   \cr
\end{pmatrix}
$$
with $\ell_{r+1j},\ldots \ell_{r_1j}$ $K$-linearly independent. 

Next we note that the columns of the matrix
$$
\begin{pmatrix}
v &         & 0 &\vrule &  &     &    &  \vrule& \ell_{1j+1}   \cr 
   &\ddots &  &\vrule&    & 0& & \vrule&\vdots    \cr
0 & & v & \vrule&    & &  &\vrule& \vdots   \cr \hline
  & &  &  \vrule&  v  & & 0& \vrule&\vdots   \cr
 &0 & &   \vrule&    & \ddots& & \vrule& \vdots \cr
  & &  &  \vrule&  0  & & v & \vrule&\ell_{r_1j+1}    \cr \hline
      & &  &  \vrule&    & &  & \vrule&\ell_{r_1+1j+1}     \cr 
    & 0&  &  \vrule&    & 0&  & \vrule&\vdots    \cr 
      & &  &  \vrule&    & & & \vrule&\ell_{sj+1}    \cr 
\end{pmatrix}
$$
are in the $E$-column span of $\varphi$. Arguing as before, we see that $\ell_{1j+1}$ is a linear combination of $\ell_{r_1+1j+1},\ldots,\ell_{sj+1}$, and repeating the arguments, we find that $\varphi$ can be transformed by row operations into a matrix of type
$$\begin{pmatrix}
v&\vrule &         & 0 & \vrule &0 &\ldots & 0   \cr \hline
 &\vrule&   \ddots      &   & \vrule&\ell_{2j} &\ldots & \ell_{2n}   \cr
0 &\vrule& & v & \vrule &\vdots&  & \vdots  \cr \hline
 &\vrule & &  & \vrule&\vdots & & \vdots   \cr
  &\vrule& & 0 & \vrule&\ell_{sj} & \ldots & \ell_{sn}   \cr
\end{pmatrix} = 
\begin{pmatrix}
v &\vrule& 0 \cr \hline
0  &\vrule& \varphi'   \cr
\end{pmatrix}  
$$
We conclude that $T_{\Rsh \kern -1pt c}(\sO(kH))$ is a direct summand of the complex $T_{\Rsh \kern -1pt c}(\sF)$, and
$$ \sF \cong \sO(kH)\oplus \sF',$$
since we can recover $\sF$ from its corner complex with the Beilinson functor $\bU$ applied to  $T_{\Rsh \kern -1pt c}(\sF)(a)[|a|]$
for a suitable $a \in \ZZ^t$ by \cite[Theorem  0.1]{EES}. Indeed $\bU(T_{\Rsh \kern -1pt c}(\sF)(a)[|a|])$ and $\bU(\bT(\sF)(a)[|a|])$ coincide for
$ a\gg 0$.
\end{proof}

{\it Proof} of Theorem \ref{main}. Let $\sF$ be a torsion free sheaf on $\PPn$ with no intermediate cohomology where the sheaves $\sO(kH)$ for 
$\sO(H)=\sO(d_1,\ldots,d_t)$ have no intermediate cohomology. By Proposition \ref{extremal Hm} there is an extreaml $H^m$-position of $\sF$ of the form
$$(k_1d_1-n_1-1,\ldots,k_1d_t-n_t-1)$$
and by Proposition \ref{summand} we get a summand
$$\sF \cong \sO(k_1H) \oplus \sF'.$$
If $\rank \sF=1$, we are done: $\sF'=0$ since $\sF$ is torsion free. Otherwise we can argue by induction on the rank since $\sF'$ satisfies
the assumption of the Theorem again. 
\qed

\noindent {\bf Acknowledgement} I thank  Prabhakar Rao for very valuable discussions on the material of this paper. 

\begin{bibdiv}
\begin{biblist}

\bib{EES}{article}{
   author={Eisenbud, David},
   author={Erman, Daniel},
   author={Schreyer, Frank-Olaf},
   title={Tate resolutions for products of projective spaces},
   journal={Acta Math. Vietnam.},
   volume={40},
   date={2015},
   number={1},
   pages={5--36},
}

\bib{EFS}{article}{
   author={Eisenbud, David},
   author={Floystad, Gunnar},
   author={Schreyer, Frank-Olaf},
   title={Sheaf cohomology and free resolutions over exterior algebras},
   journal={Trans. Amer. Math. Soc.},
   volume={355},
   date={2003},
   number={11},
   pages={4397--4426 (electronic)},
}

\bib{horrocks}{article}{
   author={Horrocks, G.},
   title={Vector bundles on the punctured spectrum of a local ring},
   journal={Proc. London Math. Soc. (3)},
   volume={14},
   date={1964},
   pages={689--713},
}

\end{biblist}
\end{bibdiv}

\noindent{Frank-Olaf Schreyer}\par
\noindent{Mathematik und Informatik, Universit\"at des Saarlandes, Campus E2 4, \\ 
D-66123 Saarbr\"ucken, Germany}\par
\noindent{schreyer@math.uni-sb.de}\par

\end{document}